\documentclass[11pt,reqno]{amsart}
\usepackage{amsthm,amssymb,amsmath}

\usepackage{xcolor}

\usepackage{fix-cm}
\usepackage{enumitem,needspace}
\usepackage[normalem]{ulem}
\usepackage{esint}
\usepackage{mlmodern}
\usepackage[expansion=false]{microtype}

\usepackage[T1]{fontenc}

\usepackage[colorlinks,citecolor=blue,hypertexnames=false]{hyperref}

\AtBeginDocument{
  \hypersetup{
    urlcolor=blue,
    citecolor=blue,
    linkcolor=blue,
  }
}

\newtheorem{theorem}{Theorem}
\newtheorem{proposition}{Proposition}
\newtheorem*{theorem*}{Theorem}
\newtheorem{lemma}{Lemma}[section]

\theoremstyle{definition}

\newtheorem*{definition*}{\bf Definition}

\newtheorem{remark}{\sc Remark}[section]
\newtheorem*{remark*}{\sc Remark}

\newtheorem*{example*}{\bf Example}

\newcommand{\R}{\mathbb R}
\newcommand{\dd}{\,\mathrm d}
\newcommand{\e}{\mathrm e}

\numberwithin{equation}{section}

\hypersetup{
  pdftitle={Lower bounds for  heat kernels of elliptic operators with unbounded diffusion, drift, and potential terms},
  pdfauthor={Sallah Eddine Boutiah},
  pdfsubject={Second revised manuscript with changes in red}
}

\begin{document}

\title[A lower bound for heat kernels]{Lower bounds for heat kernels of elliptic operators with unbounded diffusion, drift, and potential terms}
\author{Sallah Eddine Boutiah}

\keywords{Elliptic operators with unbounded coefficients, semigroup, heat kernel bounds}

\subjclass[2020]{Primary 35K08, 35K20 ; Secondary 35J10, 47D06}

\begin{abstract}
We establish a pointwise lower bound for the heat kernel of the elliptic operator $
\Lambda=(1+|x|^\alpha)\Delta +b|x|^{\alpha-2}x\cdot\nabla-|x|^\beta$, where $d\geq3$, $\alpha>2$, $\beta>\alpha-2$, and $b\in\mathbb R$.
For every $\tau>0$, we prove that 
$$
\begin{aligned}
p(t,x,y) &\geq C_\tau e^{\lambda_0t} \left(\frac{1+|y|^\alpha}{1+|x|^\alpha}\right)^{\frac{b}{2\alpha}}
\frac{(|x||y|)^{-\frac{d-1}{2}-\frac{\beta-\alpha}{4}}}{1+|y|^\alpha}\\
&\quad\times\exp\!\left[ -\int_1^{|x|}\sqrt{\frac{s^\beta}{1+s^\alpha}}\,\mathrm ds -\int_1^{|y|}\sqrt{\frac{s^\beta}{1+s^\alpha}}\,\mathrm ds \right]
\end{aligned}
$$
for all $t\geq\tau$ and $|x|,|y|\geq1$, where $\lambda_0<0$ is the largest eigenvalue of $\Lambda$ and $C_\tau>0$ is  independent of $t,x,y$. The proof applies the classical Davies–Simon argument in a weighted symmetric setting.
\end{abstract}

\address{Universit\'{e} Laval, D\'{e}partement de math\'{e}matiques et de statistique, Qu\'{e}bec, QC, Canada and Laboratoire de Math\'{e}matiques Appliqu\'{e}es, Universit\'{e} Ferhat Abbas, S\'{e}tif 1, Campus El Bez, S\'{e}tif, Algeria}
\email{sallah-eddine.boutiah.1@ulaval.ca}
\fontsize{10.5pt}{4.4mm}\selectfont
\maketitle
\section{Introduction and main result}
\label{sec:introduction}
Consider the following elliptic operator
\begin{equation}\label{def-A}
\Lambda u(x) = (1+|x|^{\alpha})\Delta u(x) + b|x|^{\alpha-2}x\cdot\nabla u(x) - |x|^{\beta}u(x),
\qquad x\in\mathbb{R}^{d},
\end{equation}
where $d\geq 3$, $\alpha>2$, $\beta>\alpha-2$, and $b\in\mathbb{R}$.

This paper continues \cite{BTR} by establishing a lower bound for the heat kernel of $\Lambda$ with the same spatial decay as the upper estimate proved there. The proof applies the Davies–Simon principle to the weighted symmetric realization of $\Lambda$, using the ground-state estimates and intrinsic upper bound established in \cite{BTR}. The resulting estimate holds for arbitrary pairs of spatial points and is uniform in time on every interval $[\tau,\infty)$, with $\tau>0$.

We first specify the realization of $\Lambda$ and recall the
results needed to state the main theorem. For $1<p<\infty$, let
$\Lambda_p$ denote the realization in $L^p(\mathbb R^d)$ with domain

\[
\begin{aligned}
D_p(\Lambda)
=
\bigl\{
u\in W^{2,p}(\mathbb{R}^{d}) :\;&
(1+|x|^\alpha)|D^2u|\in L^p(\mathbb{R}^{d}),\\
&
(1+|x|^{\alpha-1})|\nabla u|\in L^p(\mathbb{R}^{d}),\\
&
|x|^\beta u\in L^p(\mathbb{R}^{d})
\bigr\}.
\end{aligned}
\]
It was proved in \cite{boutiah et al} that $\Lambda_p$ generates
a positive strongly continuous analytic semigroup $(T_p(t))_{t\geq0}$.
These semigroups are consistent for $1<p<\infty$.
Moreover, for every $t>0$ and $\nu\in(0,1)$,
$$
T_p(t)L^p(\mathbb{R}^{d})
\subset C_b^{1+\nu}(\mathbb{R}^{d}),
$$
and the semigroup is immediately compact; see
\cite[Propositions 5 and 6]{boutiah et al}.

The spectrum $\sigma(\Lambda_p)$ is independent of $p$ and consists
of a sequence of negative real eigenvalues accumulating at $-\infty$ and  $(T_p(t))_{t\geq 0}$ is irreducible;
see \cite[Section~2]{BTR}. The eigenspace associated
with the largest eigenvalue $\lambda_0<0$ is one-dimensional
and is spanned by a strictly positive radial eigenfunction $\Phi$.
Furthermore, $\Phi\in C_b^{1+\nu}(\mathbb{R}^{d})\cap C^2(\mathbb{R}^{d})$ for every $\nu\in(0,1)$ 
and $\Phi(x)\downarrow0$ as $|x|\to\infty$.

The semigroups admit a common nonnegative integral kernel $p$ such that
\[
    T_p(t)f(x)=\int_{\mathbb R^d}p(t,x,y)f(y)\, dy,
    \qquad t>0,
\]
for every $f\in L^p(\mathbb R^d)$ and almost every $x\in\mathbb R^d$,
cf. \cite{Lo-Be,Me-Pa-Wa}.
All pointwise statements below refer to the jointly continuous version
constructed in Proposition~\ref{prop:kernel}. The  weight 
$$
 \rho(x):=(1+|x|^\alpha)^{\frac b\alpha-1}
$$
allows us to write $\Lambda$ in divergence form
$$
 \Lambda u =\frac1\rho\operatorname{div}\bigl((1+|x|^\alpha)\rho\nabla u\bigr) -|x|^\beta u.
$$
As shown in Proposition~\ref{prop:kernel}, the kernel satisfies the weighted symmetry relation $\rho(x)p(t,x,y)=\rho(y)p(t,y,x)$.
Equivalently, the kernel with respect to $\rho(x)\,dx$,
$
    k(t,x,y):=p(t,x,y) / \rho(y),
$
is symmetric in $x$ and $y$. Since $p(t,x,y)=k(t,x,y)\rho(y)$, estimates for $k$ must be
multiplied by $\rho(y)$ to obtain the corresponding estimates for the kernel $p$ with respect to Lebesgue measure.

To describe the decay of $\Phi$, set

\begin{equation}\label{eq:pI}
\gamma:=\frac{d-1}{2}+\frac{\beta-\alpha}{4},
\qquad
I(r):=\int_1^r
\sqrt{\frac{s^\beta}{1+s^\alpha}}\, ds,
\qquad r\geq 1.
\end{equation}

The ground-state estimate proved in \cite{BTR} is the following.

\begin{proposition}[{\cite[Proposition 1]{BTR}}]
\label{baondedofphi}
Let $d\geq 3$, $\alpha>2$, $\beta>\alpha-2$, and
$b\in\mathbb{R}$. Let $\Phi$ be a strictly positive eigenfunction
of $\Lambda$ corresponding to its largest eigenvalue
$\lambda_0<0$. Then there exist constants $c_-,c_+>0$ such that
\begin{equation}\label{eq:phicomparison}
\begin{aligned}
c_-|x|^{-\gamma}
(1+|x|^\alpha)^{-\frac{b}{2\alpha}}
\exp\bigl(-I(|x|)\bigr)
&\leq \Phi(x)
\\
&\leq
c_+|x|^{-\gamma}
(1+|x|^\alpha)^{-\frac{b}{2\alpha}}
\exp\bigl(-I(|x|)\bigr)
\end{aligned}
\end{equation}
for every $|x|\geq 1$.
\end{proposition}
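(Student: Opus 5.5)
This ground-state estimate reduces to a one-dimensional radial problem, which I would solve by comparison with explicit sub- and supersolutions built from a WKB ansatz. Since $\Phi$ is radial, write $\Phi(x)=\phi(r)$ with $r=|x|$. Then $\phi$ solves
\[
(1+r^\alpha)\Bigl(\phi''+\frac{d-1}{r}\phi'\Bigr)+b\,r^{\alpha-1}\phi'-(r^\beta+\lambda_0)\phi=0,\qquad r>0,
\]
with $\phi>0$ and $\phi\downarrow 0$ at infinity. Set
\[
\psi(r):=r^{-\gamma}(1+r^\alpha)^{-\frac{b}{2\alpha}}\e^{-I(r)}.
\]
I would show that $\psi$ is an approximate solution. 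Writing $q(r)=\sqrt{r^\beta/(1+r^\alpha)}$, we have $\psi'/\psi=-q-\gamma/r-\tfrac b2 r^{\alpha-1}/(1+r^\alpha)$. Substituting into the operator, the leading term $(1+r^\alpha)q^2=r^\beta$ cancels the potential. The first-order terms $2(1+r^\alpha)q\cdot\bigl(\gamma/r+\ldots\bigr)$, together with $-(1+r^\alpha)q'$ and the drift and $(d-1)/r$ contributions, cancel exactly at order $r^{\beta/2+\alpha/2-1}$; this is precisely what the choices of $\gamma$ and of the exponent $-b/(2\alpha)$ are designed to do. The remainder is then
\[
\Lambda\psi=\psi\cdot O\bigl(r^{\alpha-2}+r^{\beta/2+\alpha/2-1-\kappa}\bigr)
\]
for some $\kappa>0$ coming from the expansion $(1+r^\alpha)^{-1/2}=r^{-\alpha/2}(1+O(r^{-\alpha}))$. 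The condition $\beta>\alpha-2$ is what makes this remainder, including the $\lambda_0\psi$ term, $o(r^\beta\psi)$.

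Next I would perturb the ansatz to obtain genuine sub- and supersolutions on $[R,\infty)$ for $R$ large. Take
\[
\psi_\pm(r)=\psi(r)\bigl(1\pm r^{-\varepsilon}\bigr)
\]
with small $\varepsilon>0$; alternatively, replace $I$ by $(1\mp\delta)$-type corrections that decay, such as adding $\pm r^{-\varepsilon}$ inside the exponent. The correction generates a term of size $\mp 2\varepsilon(1+r^\alpha)q\,r^{-\varepsilon-1}\psi\approx \mp 2\varepsilon r^{\beta/2+\alpha/2-1-\varepsilon}\psi$. I would choose $\varepsilon<\kappa$ so that this dominates the remainder $r^{\alpha-2}\psi$, which requires $\varepsilon<(\beta-\alpha+2)/2$. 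With this choice $\Lambda\psi_+\le0\le\Lambda\psi_-$ on $|x|\ge R$. Checking the signs here is the main computational obstacle.

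Finally I would apply the maximum principle on the exterior domain $\{|x|>R\}$. The operator $\Lambda-\lambda_0$ has zeroth-order coefficient $-(|x|^\beta+\lambda_0)<0$ there once $R$ is large. Choose constants $C_1,C_2$ so that $C_1\psi_-\le\Phi\le C_2\psi_+$ on $|x|=R$; this is possible because $\Phi>0$ is continuous. All three functions tend to $0$ at infinity, so the comparison $w=\Phi-C_2\psi_+$ (respectively $C_1\psi_--\Phi$) cannot have a positive maximum in the exterior region, and the two-sided bound holds for $|x|\ge R$. On the compact annulus $1\le|x|\le R$, both $\Phi$ and the right-hand sides of \eqref{eq:phicomparison} are positive and continuous, so they are comparable there. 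Adjusting $c_\pm$ then yields the estimate for all $|x|\ge1$. One point needs care: the decay of $\Phi$ at infinity must be used to exclude the growing second solution of the ODE. This is handled by the maximum principle together with $\Phi\downarrow0$, since both $\psi_\pm\to0$.
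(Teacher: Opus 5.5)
The paper gives no proof of this proposition. It is imported from \cite[Proposition~1]{BTR} and used as a black box, so your argument supplies a self-contained proof rather than competing with an internal one.

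The WKB part checks out exactly. With $a=1+r^\alpha$ and $\psi'/\psi=-q-g$, where $g=\gamma/r+\tfrac b2 r^{\alpha-1}/a$, the two drift contributions $\pm b r^{\alpha-1}q$ cancel identically. Since $2\gamma=d-1+\tfrac{\beta-\alpha}{2}$, the remaining $q$-linear terms sum to $-\tfrac{\alpha}{2}q/r$. Hence $(\Lambda-\lambda_0)\psi=\psi\cdot O\bigl(r^{\alpha-2}+r^{(\beta-\alpha)/2-1}\bigr)$, so your $\kappa$ is $\alpha$.

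The perturbation step, however, has its signs reversed. For $h=1+r^{-\varepsilon}$ one has $h'=-\varepsilon r^{-\varepsilon-1}$ and $\psi'=-(q+g)\psi$. The dominant new term is therefore $2a\psi'h'=+2\varepsilon a(q+g)r^{-\varepsilon-1}\psi\approx+2\varepsilon r^{(\alpha+\beta)/2-1-\varepsilon}\psi$. So $\psi(1+r^{-\varepsilon})$ satisfies $(\Lambda-\lambda_0)\psi_+\ge0$ for large $r$, i.e.\ it is a subsolution. The supersolution is $\psi(1-r^{-\varepsilon})$, which decays slightly more slowly than $\psi$. Your claim $\Lambda\psi_+\le0\le\Lambda\psi_-$ is backwards.

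The repair is just to swap roles. Bound $\Phi$ above by $C_2\psi(1-r^{-\varepsilon})$ and below by $C_1\psi(1+r^{-\varepsilon})$ on $\{|x|>R\}$, with $R$ large; in particular take $R^\beta>-\lambda_0$ and $R^{-\varepsilon}\le\tfrac12$. Everything else goes through unchanged: the constraint $\varepsilon<(\beta-\alpha+2)/2$, the exterior maximum principle, and compactness on $1\le|x|\le R$. Note that $\Phi\to0$ is needed only for the upper bound. For the lower bound, $C_1\psi(1+r^{-\varepsilon})-\Phi\le C_1\psi(1+r^{-\varepsilon})\to0$ automatically.

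Compared with citing \cite{BTR}, your route shows exactly where $\beta>\alpha-2$ enters. It also uses only $\Phi\in C^2$, $\Phi>0$ and $\Phi\to0$, not even radial symmetry.
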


\medskip

At the level of the heat kernel, the intrinsic ultracontractivity
established in \cite{BTR} yields, for each $t>0$,
\begin{equation}\label{eq:intro-upper}
 p(t,x,y)\leq m_t\e^{\lambda_0t}\Phi(x)\Phi(y)\rho(y),
 \qquad x,y\in\mathbb R^d,
\end{equation}
where $m_t>0$ is independent of $x,y$.
For $0<t\leq1$, one may take
\[
 m_t=C_1\exp\bigl(C_2t^{-\eta}-\lambda_0t\bigr),
 \qquad
 \eta=\frac{\beta-\alpha+2}{\beta+\alpha-2}\in(0,1).
\]
Moreover, a constant admissible at time $\tau$ remains admissible
for all $t\geq\tau$; this follows from the semigroup property and
the ground-state identity \eqref{equa_Phi}, as recalled in Lemma~\ref{lem:upper}.

A diagonal lower estimate is also obtained in \cite[Proposition~3]{BTR}. Here we derive the corresponding lower
bound for arbitrary pairs of spatial points by combining the intrinsic upper estimate with the Davies-Simon argument.

\medskip

We now state the main result of this paper.

\begin{theorem}\label{thm:main}
Let $d\geq 3$, $\alpha>2$, $\beta>\alpha-2$, and
$b\in\mathbb R$. For every $\tau>0$, there exists a constant
$C_\tau>0$ such that, for all $t\geq\tau$ and $|x|,|y|\geq1$,
\begin{equation}\label{eq:main}
 p(t,x,y)\geq C_\tau\e^{\lambda_0t}
 \left(\frac{1+|y|^\alpha}{1+|x|^\alpha}\right)^{\frac{b}{2\alpha}}
 \frac{(|x||y|)^{-\gamma}}{1+|y|^\alpha}
 \exp\bigl(-I(|x|)-I(|y|)\bigr).
\end{equation}
The constant $C_\tau$ is independent of $t,x,y$.
\end{theorem}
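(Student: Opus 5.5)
The idea is to prove the lower bound $k(t,x,y)\ge c_\tau e^{\lambda_0 t}\Phi(x)\Phi(y)$ for the symmetric kernel $k=p/\rho(y)$, for $t\ge\tau$. Multiplying by $\rho(y)$ and inserting the lower bound of Proposition~\ref{baondedofphi} then gives \eqref{eq:main}. Indeed,
\[
\Phi(x)\Phi(y)\rho(y)\ \ge\ c_-^2 (|x||y|)^{-\gamma}(1+|x|^\alpha)^{-\frac b{2\alpha}}(1+|y|^\alpha)^{-\frac b{2\alpha}}(1+|y|^\alpha)^{\frac b\alpha-1}e^{-I(|x|)-I(|y|)},
\]
and the powers of $1+|\cdot|^\alpha$ combine into $\bigl(\frac{1+|y|^\alpha}{1+|x|^\alpha}\bigr)^{\frac b{2\alpha}}(1+|y|^\alpha)^{-1}$, which is the weight in \eqref{eq:main}.

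The argument takes place in $L^2(\rho\,dx)$, where the realization of $\Lambda$ is self-adjoint with kernel $k$ (Proposition~\ref{prop:kernel}). We normalize $\Phi$ so that $\int\Phi^2\rho=1$. We then pass to the ground-state transform $\tilde k(t,x,y)=e^{-\lambda_0 t}k(t,x,y)/(\Phi(x)\Phi(y))$. This is the kernel of a Markov semigroup $S(t)$ on $L^2(\Phi^2\rho\,dx)$, with
\[
\int\tilde k(t,x,y)\Phi(y)^2\rho(y)\,dy=1 .
\]
This identity follows from the ground-state identity $T(t)\Phi=e^{\lambda_0 t}\Phi$. The intrinsic upper bound \eqref{eq:intro-upper} reads $\tilde k(t,x,y)\le m_t$, and by Lemma~\ref{lem:upper} this holds with $m_t\le m_{\tau/2}$ for all $t\ge\tau/2$.

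The Davies--Simon step is the following. Since $\lambda_0$ is a simple isolated eigenvalue and the spectrum is discrete, $S(t)$ has spectral gap $\delta=\lambda_0-\lambda_1>0$ on $L^2(\Phi^2\rho)$. Hence $S(t)-P\to0$ in operator norm, where $P$ is the projection onto constants. Ultracontractivity of $S$ upgrades this to uniform kernel convergence. Writing $t=\tau/4+s+\tau/4$, we get
\[
|\tilde k(t,x,y)-1|\le \|S(\tau/4)\|_{1\to2}\,\|S(s)-P\|_{2\to2}\,\|S(\tau/4)\|_{2\to\infty}\le m_{\tau/2}\,e^{-\delta s}.
\]
Here $\|S(\tau/4)\|_{2\to\infty}\le m_{\tau/2}^{1/2}$ follows from $\tilde k(\tau/2,x,x)\le m_{\tau/2}$ and symmetry, and similarly for the $1\to2$ norm. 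Thus $\tilde k\ge1/2$ for $t\ge T_0$, with $T_0$ depending only on $\tau$.

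For $t\in[\tau,T_0]$ a separate positivity argument is needed. Using symmetry,
\[
\tilde k(t,x,y)=\int\tilde k(t/2,x,z)\tilde k(t/2,y,z)\Phi(z)^2\rho(z)\,dz .
\]
Restrict the integral to a large ball $B_R$ on which $\int_{B_R}\tilde k(t/2,x,z)\Phi^2\rho\,dz\ge 1/2$ for all $x$. Such an $R$ exists uniformly in $x$ and $t$ because of the upper bound $m_{\tau/2}$ and the finiteness of $\int\Phi^2\rho$: the mass outside $B_R$ is at most $m_{\tau/2}\int_{B_R^c}\Phi^2\rho$. Cauchy--Schwarz alone does not give a lower bound on this product integral, so I would argue more simply through iteration. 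Write
\[
\tilde k(t)=\int\!\!\int \tilde k(t/3,x,z)\,\tilde k(t/3,z,w)\,\tilde k(t/3,w,y)\,\Phi^2\rho(z)\,\Phi^2\rho(w)\,dz\,dw .
\]
The middle factor is bounded below on $B_R\times B_R$ by $c>0$ uniformly for $t/3\in[\tau/3,T_0/3]$, by joint continuity and strict positivity of the kernel on compacts, which comes from irreducibility and Harnack. Each outer factor then contributes at least $1/2$. This yields $\tilde k\ge c/4$.

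The main obstacle is this last uniform positivity on compacts together with the uniform tail control. Both require that $m_t$ be uniform for $t\ge\tau/3$, which Lemma~\ref{lem:upper} provides, and that the kernel be continuous and strictly positive. I would get strict positivity from the parabolic Harnack inequality on bounded sets, where the coefficients are smooth and bounded.
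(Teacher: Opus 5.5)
Your proof is correct, but it covers the range $t\ge\tau$ differently from the paper. Both arguments share the core Davies--Simon step. That step splits time into three equal parts, localizes the outer factors to a ball $\overline B_R$ using the intrinsic upper bound and $\Phi^2\rho\in L^1$, and takes a positive minimum of the middle factor on $\overline B_R\times\overline B_R$; both then finish by inserting Proposition~\ref{baondedofphi}.

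The paper performs the splitting only once, at the single time $\tau$ (with $s=\tau/3$, one radius $R$ and one constant $\kappa_{\tau/3}$). It then propagates the bound forward using only the ground-state identity. In your notation, with $c$ the lower bound for $\tilde k(\tau,\cdot,\cdot)$,
\[
\tilde k(t,x,y)=\int\tilde k(t-\tau,x,z)\,\tilde k(\tau,z,y)\,\Phi(z)^2\rho(z)\,dz\ \ge\ c\int\tilde k(t-\tau,x,z)\,\Phi(z)^2\rho(z)\,dz=c .
\]
So the Markov property of $S$ alone handles all later times.

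You instead treat large times by a spectral gap and intermediate times by a splitting that is uniform over $t/3\in[\tau/3,T_0/3]$. The latter uses joint continuity in $t$, which Proposition~\ref{prop:kernel} provides. Your route gives more information, namely $\sup_{x,y}|\tilde k(t,x,y)-1|\le m_{\tau/2}e^{-\delta(t-\tau/2)}$, i.e.\ the sharp asymptotic constant as $t\to\infty$. The paper's route is shorter and needs no spectral input at all.

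The gap on $L^2(\Phi^2\rho)$ is not literally what the paper records, since its spectral statements concern $\Lambda_p$ on $L^p(\R^d)$, so you need to supply it. For example: $S(t_0)$ is a self-adjoint Hilbert--Schmidt contraction with strictly positive kernel, and $S(t_0)=S(t_0/2)^2\ge0$. The Perron--Frobenius (Jentzsch) theorem for positivity improving operators then gives $q:=\|S(t_0)-P\|_{2\to2}<1$, and $S(nt_0)-P=(S(t_0)-P)^n$.

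Finally, the localization radius in your three-fold splitting must be chosen with $m_{\tau/3}$ rather than $m_{\tau/2}$, as you acknowledge at the end.
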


In fact, we prove the global intrinsic estimate
\begin{equation}\label{eq:main-intrinsic}
 p(t,x,y)\geq\widetilde C_\tau\e^{\lambda_0t}
 \Phi(x)\Phi(y)\rho(y),
 \qquad t\geq\tau,\quad x,y\in\mathbb R^d,
\end{equation}
for some $\widetilde C_\tau>0$ independent of $t,x,y$.
Together with \eqref{eq:intro-upper}, this gives
\begin{equation}\label{eq:two-sided}
 \widetilde C_\tau
 \leq\frac{\e^{-\lambda_0t}p(t,x,y)}{\Phi(x)\Phi(y)\rho(y)}
 \leq m_\tau,
 \qquad t\geq\tau,\quad x,y\in\mathbb R^d.
\end{equation}
Thus the spatial term in \eqref{eq:main} is sharp up to
multiplicative constants, by \eqref{eq:phicomparison}.
The restriction $|x|,|y|\geq1$ enters only through the explicit
formula for the ground-state profile. The constants in
\eqref{eq:two-sided} depend on the chosen normalization of $\Phi$. 

\noindent
\textbf{Proof strategy.}
We prove \eqref{eq:main-intrinsic} by applying the argument of Davies and Simon \cite[Theorem~3.2]{DS} to the symmetric
kernel $p(t,x,y)/\rho(y)$. For fixed $s>0$, the intrinsic upper bound and the integrability
of $\Phi^2\rho$ yield a localization estimate on a ball whose radius is independent of the spatial variables.
Continuity and strict positivity on this ball, combined with the semigroup property, then give a global intrinsic lower
bound at time $3s$.

Taking $s=\tau/3$ and using the ground-state identity, we extend this bound to all $t\geq\tau$ with the same constant.
The explicit spatial estimate follows from the ground-state bounds in Proposition~\ref{baondedofphi}.

\begin{remark}
When $b=0$, our theorem provides a lower bound for the heat kernel of the Schr\"odinger-type operator
$$(1+|x|^\alpha)\Delta-|x|^\beta.$$
The corresponding upper bound was established in \cite{CRT17}.
\end{remark}

\subsection{Related results}\label{subsec:literature-kernel-estimates}
There is an extensive literature on heat kernel bounds for local and non-local operators with unbounded coefficients or 
having critical polar singularities that make the standard heat kernel bounds invalid. 
For unbounded diffusion coefficients, generation
and regularity results are developed in \cite{Me-Pa-Wa,FL}. In the absence of a drift and a potential, Metafune and Spina
\cite{MS12b} derive kernel estimates from weighted Nash and Hardy
inequalities. More precisely, for $d\geq3$ and $\alpha>2$, if
$q$ denotes the kernel of $(1+|x|^\alpha)\Delta$ relative to $d\mu(x)=(1+|x|^\alpha)^{-1}\, d x$, then

\begin{equation}
 q(t,x,y)\leq C t^{-\sigma}\phi(x)\phi(y),
 \qquad 0<t\leq1,\qquad
 \sigma=
 \begin{cases}
  \dfrac{d+\alpha-4}{\alpha-2},&2<\alpha\leq4,\\[4pt]
  \dfrac d2,&\alpha>4,
 \end{cases}
 \label{eq:literature-pure-diffusion}
\end{equation}
where the positive ground state satisfies\footnote{Here $\asymp$ means comparison up
to positive constants independent of $x$.}
$\phi(x)\asymp(1+|x|)^{2-d}$. The corresponding Lebesgue
kernel is $q(t,x,y)/(1+|y|^\alpha)$. The low-dimensional model
$(1+|x|^2)^{\alpha/2}\Delta$ is treated by Spina
\cite{Sp13}, while Metafune and Spina \cite{MS14}
investigate the homogeneous, degenerate operator $|x|^\alpha\Delta$.

For Schr\"odinger operators with unbounded diffusion and potential
terms Lorenzi and Rhandi \cite{LR} consider
$(1+|x|^\alpha)\Delta-|x|^\beta$, establish generation for
$0\leq\alpha\leq2$, $\beta\geq0$, and derive heat kernel
estimates for $0\leq\alpha<2$, $\beta\geq2$.
For $\alpha>2$ and $\beta>\alpha-2$, the $L^p$ realization
and ground-state estimates are developed by Canale, Rhandi
and Tacelli in \cite{CRT16,CRT17}. Canale and Tacelli
\cite{CT} obtain weighted bounds with polynomial time
dependence. The critical case $\beta=\alpha-2$ is treated
by Durante, Manzo and Tacelli \cite{DMT}.

For nonzero $b$ and no potential, Metafune, Spina and Tacelli
\cite{MST14a} study
$(1+|x|^\alpha)\Delta+b|x|^{\alpha-2}x\cdot\nabla$.
The addition of $-|x|^\beta$, with $\beta>\alpha-2$, leads
to the realizations studied in \cite{boutiah et al} and the
kernel estimates of \cite{BTR} used above. The present proof
uses their intrinsic upper bound as an input to obtain the
off-diagonal lower estimate \eqref{eq:main-intrinsic}.
For more general Kolmogorov operators, estimates based on
time-dependent Lyapunov functions are obtained in
\cite{Sp08,KLR}.

Weighted estimates also arise for operators with singular drifts.
Milman and Sem\"enov \cite{MS04} develop a desingularization
method based on Sobolev estimates and weighted $L^1$ semigroup
bounds. Metafune, Sobajima and Spina \cite{MSS} treat
discontinuous diffusion coefficients, radial singular drifts and
inverse-square potentials, while Kinzebulatov, Sem\"enov and
Szczypkowski \cite{KSS} establish two-sided estimates for the
fractional Laplacian with a critical Hardy drift.

Recently, Boutiah and Kinzebulatov \cite{BK} obtain weighted upper heat kernel bounds for attractive particle systems with finite number of particles $N\ge2$ and $d\ge3$, when the attraction strength satisfies $0<\nu<2(d-2)$. Their proof uses weighted Sobolev estimates, Moser iteration and the many-particle Hardy inequality \cite{HHLT}. In dimension two, they also establish upper heat kernel bounds for a finite-particle system of Keller–Segel type under a suitable smallness assumption on the attraction strength.

Boutiah, Kinzebulatov and Madou \cite{BKM}
derive related lower bounds from spectral-gap estimates
for Gaussian measures with logarithmic interaction weights.

\section{Weighted estimates and pointwise kernel properties}
\label{sec:preliminaries}

For the weight $\rho$, we use the notation

\[
 L_\rho^2:=L^2(\mathbb R^d,\rho(x)\,\dd x),
 \qquad \langle g\rangle_\rho:=\int_{\mathbb R^d}g(x)\rho(x)\,\dd x,
 \qquad \langle g\rangle:=\int_{\mathbb R^d}g(x)\,\dd x.
\]
\begin{lemma}\label{lem:tail}
The ground state has finite, strictly positive weighted mass:

\begin{equation}\label{bound_varphi}
    0 < \langle |\Phi|^2 \rangle_\rho < \infty.
\end{equation}
\end{lemma}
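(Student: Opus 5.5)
Positivity and finiteness are handled separately. On compacts we use that $\Phi$ is continuous and strictly positive; on $|x|\ge1$ we use the explicit decay bound of Proposition~\ref{baondedofphi}.

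\emph{Strict positivity.} $\Phi$ is continuous and strictly positive on $\mathbb R^d$, and $\rho(x)=(1+|x|^\alpha)^{\frac b\alpha-1}>0$ everywhere. Hence $\Phi^2\rho$ is a continuous, strictly positive function. Its integral over, say, the unit ball is already positive, so $\langle|\Phi|^2\rangle_\rho>0$.

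\emph{Finiteness.} We split $\mathbb R^d$ into $\{|x|\le1\}$ and $\{|x|\ge1\}$.

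On the unit ball, $\Phi\in C_b^{1+\nu}$ is bounded and $\rho$ is continuous, so that piece of the integral is finite.

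On $|x|\ge1$ we use the upper bound in \eqref{eq:phicomparison}:
\[
\Phi(x)^2\rho(x)\le c_+^2|x|^{-2\gamma}(1+|x|^\alpha)^{-\frac b\alpha}(1+|x|^\alpha)^{\frac b\alpha-1}e^{-2I(|x|)}
= c_+^2\frac{|x|^{-2\gamma}}{1+|x|^\alpha}e^{-2I(|x|)}.
\]
The powers of $(1+|x|^\alpha)$ carrying $b$ cancel exactly, which is the point of pairing $\Phi^2$ with $\rho$. In polar coordinates it then suffices to show
\[
\int_1^\infty r^{d-1-2\gamma-\alpha}e^{-2I(r)}\,dr<\infty.
\]
For $r\ge1$ we have $1+s^\alpha\le 2s^\alpha$, so
\[
I(r)\ge \tfrac1{\sqrt2}\int_1^r s^{\frac{\beta-\alpha}2}\,ds.
\]
Since $\beta>\alpha-2$, the exponent $\frac{\beta-\alpha}{2}>-1$, and this lower bound equals $c\,(r^{\kappa}-1)$ with $\kappa=\frac{\beta-\alpha}{2}+1>0$.

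This is the only step needing care: we must check that $I(r)\to\infty$ fast enough, even when $\beta<\alpha$ and the integrand of $I$ decays. The computation shows $e^{-2I(r)}\le Ce^{-c' r^\kappa}$ with $\kappa>0$. This stretched-exponential decay beats any power of $r$, so the radial integral converges regardless of the value of $d-1-2\gamma-\alpha$.

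Combining the two regions gives $\langle|\Phi|^2\rangle_\rho<\infty$, which together with positivity proves \eqref{bound_varphi}.
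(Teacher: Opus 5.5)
Your proof is correct, and its structure matches the paper's. You split off the unit ball, where continuity and strict positivity give a finite positive contribution. On $|x|\ge1$ you use the upper ground-state bound, where the $b$-dependent powers cancel to leave $c_+^2|x|^{-2\gamma}(1+|x|^\alpha)^{-1}e^{-2I(|x|)}$.

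The difference is in how the radial integral is made to converge. The paper discards the exponential via $I\ge0$ and computes the power instead: $d-1-2\gamma-\alpha=-(\alpha+\beta)/2$ with $\alpha+\beta>2$. So $r^{-(\alpha+\beta)/2}$ is already integrable on $[1,\infty)$, and this also gives the explicit tail bound $\langle \Phi^2\mathbf 1_{|x|>R}\rangle_\rho\le \frac{2\omega_{d-1}c_+^2}{\alpha+\beta-2}R^{-(\alpha+\beta-2)/2}$.

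You leave the power unspecified and use $\beta>\alpha-2$ to get $e^{-2I(r)}\le Ce^{-c'r^\kappa}$ with $\kappa=\frac{\beta-\alpha}{2}+1>0$. This is valid, and it gives a much stronger, stretched-exponential tail that does not depend on the exact value of $\gamma$. However, the step you single out as `the only step needing care' is not actually needed here: a one-line exponent computation shows the polynomial prefactor alone suffices.

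Either version is enough for choosing $R$ in Lemma~\ref{lem:mass}, which only uses that the tail tends to $0$.
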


\begin{proof}
By \eqref{eq:phicomparison},
\[
    \Phi(x)^2\rho(x)
    \leq c_+^2
    \frac{|x|^{-2\gamma}}{1+|x|^\alpha}
    \e^{-2I(|x|)},
    \qquad |x|\geq 1.
\]
Since $I(r)\geq0$ and $1+r^\alpha\geq r^\alpha$ for $r\geq1$,
and $d-1-2\gamma-\alpha=-(\alpha+\beta)/2$, polar coordinates give,
for every $R\geq1$,
\begin{align*}
\langle \Phi^2 \mathbf{1}_{|x|>R} \rangle_{\rho}
    &\leq
    \omega_{d-1}c_+^2
    \int_R^\infty r^{-(\alpha+\beta)/2}\,\dd r \\
    &=
    \frac{2\omega_{d-1}c_+^2}{\alpha+\beta-2}
    R^{-(\alpha+\beta-2)/2},
\end{align*}
Here $\omega_{d-1}=|\mathbb S^{d-1}|$, and the integral converges
because $\alpha+\beta-2>2(\alpha-2)>0$. 

On the other hand, the continuity and positivity of
$\Phi$ and $\rho$ imply
$$
    0< \langle \Phi^2 \mathbf{1}_{B_1} \rangle_{\rho}   \leq |B_1|\max_{\overline{B_1}}(\Phi^2\rho)
    <\infty.
$$
Combining these two estimates with $R=1$ proves \eqref{bound_varphi}.

\end{proof}

The following consequence of \cite{BTR} is the only global kernel
estimate required in the lower-bound argument. We also record its
stability under evolution to later times.

\begin{lemma}\label{lem:upper}
For each $t>0$, there is $m_t>0$ such that, for almost every $(x,y)\in\mathbb R^d\times\mathbb R^d$,
\begin{equation}\label{eq:upper}
    0 \leq p(t,x,y)
    \leq m_t\e^{\lambda_0t}\Phi(x)\Phi(y)\rho(y).
\end{equation}
Moreover, any constant $m_r$ admissible at time $r>0$
remains admissible for all $t\geq r$.

\end{lemma}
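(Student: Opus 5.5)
The first assertion, the bound \eqref{eq:upper} for a fixed $t>0$, is the intrinsic ultracontractivity estimate \eqref{eq:intro-upper} from \cite{BTR}. I will quote it directly. Nonnegativity of $p$ follows from positivity of the semigroup: for $f\geq0$ we have $T(t)f\geq0$ almost everywhere. So the work is in the second assertion: if \eqref{eq:upper} holds at time $r$ with constant $m_r$, then it holds at every $t\geq r$ with the same constant.

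For $t>r$, write $t=r+h$ with $h>0$. Use the semigroup property at the kernel level in the order
\[
p(t,x,y)=\int_{\mathbb R^d}p(h,x,z)\,p(r,z,y)\,\dd z
\]
for almost every $(x,y)$, which holds by consistency of the kernel representation and Fubini. Insert \eqref{eq:upper} at time $r$ for the second factor:
\[
p(t,x,y)\leq m_r\e^{\lambda_0 r}\Phi(y)\rho(y)\int_{\mathbb R^d}p(h,x,z)\Phi(z)\,\dd z
= m_r\e^{\lambda_0 r}\Phi(y)\rho(y)\,\bigl(T(h)\Phi\bigr)(x).
\]
Next I need the ground-state identity $T(h)\Phi=\e^{\lambda_0 h}\Phi$, labelled \eqref{equa_Phi} in the paper. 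It holds because $\Phi\in D_p(\Lambda)$ is an eigenfunction with $\Lambda_p\Phi=\lambda_0\Phi$, so $T_p(h)\Phi=\e^{\lambda_0 h}\Phi$ by standard semigroup theory. Membership in some $L^p$ follows from boundedness of $\Phi$ together with its decay from Proposition~\ref{baondedofphi}, or directly from the spectral description. Substituting gives $p(t,x,y)\leq m_r\e^{\lambda_0 t}\Phi(x)\Phi(y)\rho(y)$ almost everywhere, as required.

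The only delicate point is measure-theoretic. The bound at time $r$ holds only almost everywhere in $(z,y)$. I therefore apply Fubini to conclude that for a.e.\ $y$ it holds for a.e.\ $z$, and this is enough inside the $z$-integral. The integral $\int p(h,x,z)\Phi(z)\,\dd z$ equals $T(h)\Phi(x)$ for a.e.\ $x$. Everything is nonnegative, so Tonelli justifies exchanging the integrals without any integrability check beyond $\Phi\in L^p$.
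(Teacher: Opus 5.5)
Your argument is correct and matches the paper's proof: the same splitting $p(t,x,y)=\int_{\R^d}p(t-r,x,z)\,p(r,z,y)\,\dd z$, the time-$r$ bound applied to the second factor, and the ground-state identity \eqref{equa_Phi} (using $\Phi\in D_q(\Lambda)$) to produce $\e^{\lambda_0 t}\Phi(x)$. The one difference is that the paper does not take \eqref{eq:intro-upper} for all $t>0$ as given, since the introduction refers the large-time case back to this lemma. Instead it derives the bound for $0<t\le1$ from \cite[Theorem~1]{BTR} by converting the weighted kernel there into $p$; this is where $\rho=\varphi^2/a$ enters, with $a=1+|x|^\alpha$ and $\varphi=a^{b/(2\alpha)}$. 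It then obtains $t>1$ from the extension step with $r=1$, which your argument already covers.
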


\begin{proof}

Set $a(x)=1+|x|^\alpha$, $\varphi(x)=a(x)^{b/(2\alpha)}$ and
$\eta=(\beta-\alpha+2)/(\beta+\alpha-2)$.
Let $ d\mu_0(x)=a(x)^{-1}\, dx$ and let $k_{\mu_0}$ denote the
weighted kernel used in \cite{BTR}.
By \cite[Section~3, Lemma~2]{BTR},
for every $t>0$ and almost every $(x,y)$,
\[
    p(t,x,y)=\frac{\varphi(y)}{\varphi(x)a(y)}k_{\mu_0}(t,x,y).
\]
Moreover, \cite[Theorem~1]{BTR} gives, for $0<t\leq1$,
\[
    k_{\mu_0}(t,x,y)
    \leq C_1\exp(C_2t^{-\eta})(\varphi\Phi)(x)(\varphi\Phi)(y).
\]

Since $\varphi^2/a=\rho$, we obtain
\[
    p(t,x,y)
    \leq C_1\exp(C_2t^{-\eta})\Phi(x)\Phi(y)\rho(y).
\]

Thus, for $0<t\leq1$, \eqref{eq:upper} holds with

\[
    m_t
    = C_1\exp\!\left(
        C_2t^{-\frac{\beta-\alpha+2}{\beta+\alpha-2}}
        -\lambda_0t
    \right).
\]
To extend this estimate, suppose that \eqref{eq:upper} holds at some time
$r>0$. Fix $q>d$. The ground-state estimate
\eqref{eq:phicomparison} and $\beta>\alpha-2$ give
$\Phi\in L^q(\mathbb R^d)$. Since $\Phi\in C^2(\mathbb R^d)$
and $\Lambda\Phi=\lambda_0\Phi$, the maximal domain characterization
in \cite{boutiah et al} yields $\Phi\in D_q(\Lambda)$.
Hence $T_q(s)\Phi=\e^{\lambda_0s}\Phi$, and the kernel representation yields
\begin{equation}\label{equa_Phi}
    \int_{\R^d}p(s,x,z)\Phi(z)\,\dd z
    =\e^{\lambda_0s}\Phi(x),\qquad s>0,
\end{equation}
for almost every $x\in\R^d$.

For every $t>r$, the semigroup property, Tonelli's theorem
and uniqueness of kernel representations give the following estimate for almost every $(x,y)\in\mathbb R^d\times\mathbb R^d$,
where we use \eqref{equa_Phi}:
\begin{align*}
    p(t,x,y)
    &= \int_{\R^d}p(t-r,x,z)p(r,z,y)\,\dd z \\
    &\leq m_r\e^{\lambda_0r}\Phi(y)\rho(y)
        \int_{\R^d}p(t-r,x,z)\Phi(z)\,\dd z \\
    &= m_r\e^{\lambda_0t}\Phi(x)\Phi(y)\rho(y).
\end{align*}
Thus, $m_r$ remains admissible for every $t\geq r$.
In particular, choosing $r=1$ establishes \eqref{eq:upper}
for all $t>1$ with $m_t=m_1$.
\end{proof}
The estimates above initially hold almost everywhere. The next
proposition selects a single continuous version of the kernel and
establishes the identities at every spatial point. This also
provides the compact-set positivity used in
Proposition~\ref{prop:bridge}.
\begin{proposition}\label{prop:kernel}
The kernel $p$ admits a version, still denoted by $p$, such that
\begin{equation}\label{eq:regularpositive}
    p\in C\bigl((0,\infty)\times\R^d\times\R^d\bigr),
    \qquad p(t,x,y)>0.
\end{equation}
For this version, the following properties hold for every $t>0$ and $x,y\in\R^d$:
\begin{enumerate}[label=\textup{(\roman*)},leftmargin=2em]
\item $\rho(x)p(t,x,y)=\rho(y)p(t,y,x)$ \\
\item $\int_{\R^d}p(t,x,y)\Phi(y)\,dy   =\e^{\lambda_0t}\Phi(x)$ \\
\item $p(t,x,y)\leq m_t\e^{\lambda_0t}\Phi(x)\Phi(y)\rho(y)$, with the same constant $m_t$ as in Lemma~\ref{lem:upper}. \\
\item For every $t,s>0$,
\begin{equation}\label{eq:semigroup}
    p(t+s,x,y)=\int_{\R^d}p(t,x,z)p(s,z,y)\, dz.
\end{equation}
\end{enumerate}
\end{proposition}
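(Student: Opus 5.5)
To prove Proposition~\ref{prop:kernel}, I will build the continuous version from the semigroup factorization $T(t)=T(t/3)T(t/3)T(t/3)$ and the known smoothing properties of $T_p(t)$. Then I will upgrade each a.e.\ identity to a pointwise one by continuity.

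\textbf{Step 1: the function $x\mapsto p(t,x,\cdot)$ and the candidate kernel.} For fixed $t>0$ and $x$, the map $f\mapsto T_2(t)f(x)$ is a bounded functional on $L^2$. Indeed, $T_2(t)L^2\subset C_b^{1+\nu}$, and the closed graph theorem gives $\|T_2(t)f\|_\infty\le C_t\|f\|_2$. Hence there is $k_x^t\in L^2$ with $T_2(t)f(x)=\int k_x^t f$, and $x\mapsto k^t_x$ is weakly continuous in $L^2$. For $t>0$ I define
\[
p(t,x,y):=\int_{\R^d} k_x^{t/2}(z)\,\tilde k_y^{t/2}(z)\,dz ,
\]
where $\tilde k$ is the corresponding object for the adjoint semigroup. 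The adjoint is the realization in $L^2$ of $\rho^{-1}\Lambda^*(\rho\,\cdot)$, or, more simply, one uses the weighted symmetry of $\Lambda$ in $L^2_\rho$, where $T(t)$ is self-adjoint. Working in $L^2_\rho$ is cleanest. There $T(t)$ is self-adjoint and maps $L^2_\rho$ into $C_b$, using the local comparability of $\rho$ with $1$ together with the local regularity results. Setting $K_x^{s}\in L^2_\rho$ by $T(s)f(x)=\langle K_x^s f\rangle_\rho$, I take $k(t,x,y)=\langle K_x^{t/2}K_y^{t/2}\rangle_\rho$ and $p=k\rho(y)$. Symmetry of $k$, and hence (i), is then built in.

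\textbf{Step 2: continuity.} Joint continuity in $(x,y)$ needs norm continuity of $x\mapsto K_x^{s}$ in $L^2_\rho$. I get it from $K_x^{s}=T(s/2)K_x^{s/2}$ together with compactness of $T(s/2)$, since compact operators map weakly convergent sequences to norm convergent ones. Continuity in $t$ follows from strong continuity of the semigroup applied to $K_y^{\cdot}$, combined with the same compactness argument. This compactness argument is the main technical obstacle, because one must control the dependence on the weight and work in the right space.

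\textbf{Step 3: pointwise identities and positivity.} The semigroup identity (iv) holds by construction, as Chapman--Kolmogorov at the level of $K$. This version agrees a.e.\ with the original kernel by uniqueness of kernel representations. Both sides of (ii) are continuous in $x$: the left side by dominated convergence, using the bound in (iii) locally together with Lemma~\ref{lem:tail}. Since they agree a.e.\ by \eqref{equa_Phi}, they agree everywhere. Likewise (iii) holds a.e.\ by Lemma~\ref{lem:upper}, and both sides are continuous, so it holds everywhere.

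For strict positivity, I argue as follows. The function $p\ge0$, and $k(t,x,x)=\|K_x^{t/2}\|^2_\rho>0$, since otherwise $T(t/2)f(x)=0$ for all $f$, which contradicts $T(t/2)\Phi(x)=e^{\lambda_0 t/2}\Phi(x)>0$. Suppose $p(t,x,y)=0$. Then by (iv), $p(s,x,\cdot)p(t-s,\cdot,y)=0$ a.e.\ for all $s$. Irreducibility of the semigroup, applied through the positivity improving property of positive irreducible analytic semigroups, gives $T(s)f>0$ for every $f\ge0$, $f\neq0$. This yields $\int p(t/2,x,z)p(t/2,z,y)\,dz>0$, because $p(t/2,\cdot,y)\ge0$ is nonzero and continuous, and $T(t/2)$ applied to it is strictly positive at $x$.
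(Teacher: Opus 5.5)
Your construction rests on a premise you never establish: that the semigroup acts on $L^2_\rho$ as a \emph{self-adjoint} semigroup. What is available is only that $\Lambda_p$ generates $T_p(t)$ on the unweighted spaces $L^p(\R^d)$.

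Some of what you need can be recovered from Lemma~\ref{lem:upper}. It gives $|T(t)f|\le m_t\e^{\lambda_0t}\Phi\,\langle\Phi|f|\rangle_\rho$. It also gives $\iint k(t,x,y)^2\rho(x)\rho(y)\,\dd x\,\dd y\le m_t^2\e^{2\lambda_0t}\langle\Phi^2\rangle_\rho^2$ with $k=p/\rho(y)$. So $T(t)$ extends to a bounded, Hilbert--Schmidt operator on $L^2_\rho$ with range in $C_b$. ``Local comparability of $\rho$ with $1$'' does not give this, since $L^2_\rho\not\subset L^p$ in general.

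Self-adjointness is another matter. The identity $\langle (T(t)f)\,g\rangle_\rho=\langle f\,T(t)g\rangle_\rho$ is exactly the almost-everywhere form of (i), which is one of the claims to be proved. Formal symmetry of $\rho^{-1}\operatorname{div}(a\rho\nabla\cdot)-V$ on $C_c^\infty$ does not yield it. With diffusion growing like $|x|^\alpha$, $\alpha>2$, you must show that the $L^p$ semigroup coincides with a symmetric realization, i.e.\ rule out contributions from infinity.

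Several of your steps use this unproved assumption:
\begin{itemize}
\item the identity $K_x^s=T(s/2)K_x^{s/2}$;
\item the ``built-in'' symmetry of $k$;
\item your Chapman--Kolmogorov argument, which needs $\langle K_x^aK_y^b\rangle_\rho$ to depend only on $a+b$.
\end{itemize}

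The paper obtains symmetry by Dirichlet exhaustion. On $B_n$ the form $\mathfrak a_n$ is symmetric in $L^2_\mu(B_n)$, so $k_n=p_n/\rho$ is symmetric. The kernels $p_n$ increase with $n$ by the maximum principle. The comparison \eqref{eq:exhaustion-comparison} identifies the monotone limit with $T_q(t)$, and uniform H\"older estimates give joint continuity. If you could cite a symmetric $L^2$ realization known to be consistent with $T_p$, your route (Riesz representers, compactness, irreducibility) would be shorter than the paper's, since it avoids the maximum principle and H\"older estimates. As written, though, the key step is assumed.

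The positivity step also has a gap, though a repairable one. Positivity improving for a positive irreducible analytic semigroup gives $T(t/2)g>0$ only \emph{almost everywhere}. It says nothing at the prescribed point $x$: a continuous nonnegative function can be positive a.e.\ and still vanish at $x$. Split once more, with $g=p(t/2,\cdot,y)$:
\[
p(t,x,y)=\int_{\R^d}p(t/4,x,w)\,[T(t/4)g](w)\,\dd w .
\]
Here $p(t/4,x,\cdot)$ is continuous, nonnegative and nontrivial by your diagonal argument, and $T(t/4)g>0$ a.e., so the integral is strictly positive.

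Finally, in Step~3 you use the pointwise bound (iii) to get continuity of the left side of (ii) before (iii) has been proved pointwise. Prove (iii) first.
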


\begin{proof}
Set 
$$
 a(x)=1+|x|^\alpha,\qquad V(x)=|x|^\beta,\qquad
 \rho(x)=a(x)^{b/\alpha-1},\qquad d\mu(x)=\rho(x)\,dx.
$$
Since $a\rho=a^{b/\alpha}$, then, 
$\nabla(a\rho)=b|x|^{\alpha-2}x\rho$. So,
\begin{equation}\label{eq:weighted-divergence}
 \Lambda f=\rho^{-1}\operatorname{div}(a\rho\nabla f)-Vf.
\end{equation}

\smallskip
\noindent\emph{Step 1.}
Set $B_n=B(0,n)$. On each $\overline{B_n}$, the coefficients of
$\Lambda$ are H\"older continuous and bounded, $a\ge1$, and $\rho$
is bounded above and below by positive constants, since $\alpha>2$
and $\beta>\alpha-2>0$.

Let $p_n$ be the kernel of the Dirichlet semigroup generated by $\Lambda$ on $B_n$.
Its existence, spatial continuity and strict positivity follow from
\cite[Corollary~3.5 and Section~4.1]{AEG}.
Indeed,  the operator can be written as
$$
 -\Lambda=-\operatorname{div}(a\nabla)
       +(\alpha-b)|x|^{\alpha-2}x\cdot\nabla+V,
$$
For each fixed $n$, all coefficients are real and bounded on
$B_n$, and the principal part is uniformly elliptic since $a\geq1$, so the Dirichlet realization of $-\Lambda$ in $L^2(B_n)$ therefore falls within the framework of \cite[Section~4.1]{AEG}. The same holds for its adjoint, since the adjoint form has the same principal part and bounded real lower-order coefficients. Then, \cite[Corollary~3.5]{AEG} gives, for every $t>0$,
$$
p_n(t,\cdot,\cdot)\in C_0(B_n\times B_n),
\qquad
p_n(t,x,y)>0
\quad\text{for all }x,y\in B_n.
$$
Put $k_n(t,x,y)=p_n(t,x,y)/\rho(y)$. By \eqref{eq:weighted-divergence}, the Dirichlet realization of
$-\Lambda$ in $L_\mu^2(B_n)$ is associated with
$$
\mathfrak a_n(f,g) = \int_{B_n} \bigl(a\nabla f\cdot\nabla\overline g+Vf\overline g\bigr)\,d\mu, \qquad D(\mathfrak a_n)=H_0^1(B_n).
$$
Since $a\geq1$, $V\geq0$, and $a,V,\rho,\rho^{-1}$ are bounded on $B_n$, then $\mathfrak a_n$ is densely defined, nonnegative, symmetric and closed, with form norm equivalent to the $H_0^1(B_n)$ norm. Its semigroup is therefore self-adjoint \cite[Propositions~1.24 and~1.51]{O}. The semigroup associated with $\mathfrak a_n$ coincides with the Dirichlet semigroup. Its kernel with respect to $\mu$ is therefore $k_n$.
Thus, by spatial continuity,
$$
k_n(t,x,y)=k_n(t,y,x),
\qquad t>0,\quad x,y\in B_n.
$$
We next show that the kernels increase with the domain. Let $u_n$ and $u_{n+1}$ be the Dirichlet solutions on $B_n$
and $B_{n+1}$ with the same nonnegative initial datum
$f\in C_c^\infty(B_n)$, extended by zero.
Their difference $w=u_n-u_{n+1}$ satisfies
\[
\begin{cases}
(\partial_t-\Lambda)w=0
    & \text{in }(0,\infty)\times B_n,\\
w=-u_{n+1}\leq0
    & \text{on }(0,\infty)\times\partial B_n,\\
w(0,\cdot)=0
    & \text{in }B_n.
\end{cases}
\]
Since $V\geq0$, the parabolic maximum principle gives
$w\leq0$, hence $u_n\leq u_{n+1}$. Using the kernel representations,  $f\geq0$ and continuity, we obtain
$p_n\leq p_{n+1}$ pointwise. Together with the strict positivity,
this yields
\begin{equation}\label{eq:domain-monotonicity}
0<p_n(t,x,y)\leq p_{n+1}(t,x,y),
\qquad t>0,\quad x,y\in B_n.
\end{equation}

\smallskip

\noindent\emph{Step 2. } Fix $q>d$, $T_0>0$ and $0\leq f\in C_c^\infty(\mathbb R^d)$.
Set $u(t)=T_q(t)f$. By
\cite[Section~2 and Theorem~2]{boutiah et al}, resolvent consistency and uniqueness of the Laplace transform
identify $u$ with the solution given by the $C_0$ semigroup. Thus $ u\in C\bigl([0,T_0];C_0(\mathbb R^d)\bigr),
$ and $u$ is a nonnegative classical solution of $\partial_tu=\Lambda u$ for $t>0$. We claim that
$$
\varepsilon_n:=
\sup_{\substack{0\leq t\leq T_0\\ |x|\geq n}}u(t,x) \rightarrow0.
$$
Otherwise, after passing to a subsequence, there would exist
$t_j\to t\in[0,T_0]$ and $|x_j|\to\infty$ such that
$u(t_j,x_j)\geq\varepsilon>0$. This contradicts
$$
u(t_j,x_j)
\leq \|u(t_j,\cdot)-u(t,\cdot)\|_\infty+|u(t,x_j)|
\rightarrow0.
$$
For $\operatorname{supp}f\subset B_n$, let
$$
u_n(t,x)=\int_{B_n}p_n(t,x,y)f(y)\,dy,
\qquad t>0,
$$
with $u_n(0,\cdot)=f$.
The difference $u-u_n$ solves the same homogeneous parabolic
equation, with zero initial data and boundary values in
$[0,\varepsilon_n]$. Since
$(\partial_t-\Lambda)\varepsilon_n=V\varepsilon_n\geq0$,
the comparison principle gives
\begin{equation}\label{eq:exhaustion-comparison}
0\leq u(t,x)-u_n(t,x)\leq\varepsilon_n,
\qquad 0\leq t\leq T_0,\quad x\in B_n.
\end{equation}

\smallskip

\noindent\emph{Step 3.} Fix $r>0$ and $t\geq r$. For every nonnegative
$f\in C_c^\infty(B_n)$, comparison, the kernel
representations and Lemma~\ref{lem:upper} give
\[
\int_{B_n}p_n(t,x,y)f(y)\,dy
\leq
m_r e^{\lambda_0t}\Phi(x)
\int_{B_n}\Phi(y)\rho(y)f(y)\,dy
\]
for almost every $x\in B_n$. Since $f$ has compact support and $p_n(t,\cdot,\cdot)$
and $\Phi$ are continuous, both sides are continuous in $x$. Hence the inequality holds for every $x\in B_n$.

Fixing $x$ and using the arbitrariness of $f\geq0$,
together with continuity in $y$, yields
\[
p_n(t,x,y)
\leq m_r e^{\lambda_0t}\Phi(x)\Phi(y)\rho(y),
\qquad x,y\in B_n.
\]
Dividing by $\rho(y)>0$, we obtain
\begin{equation}\label{eq:local-kernel-bound}
0\leq k_n(t,x,y)
\leq m_r e^{\lambda_0t}\Phi(x)\Phi(y),
\qquad t\geq r>0,\quad x,y\in B_n.
\end{equation}
The argument applies to each fixed time, so the estimate holds for every $t\geq r$.

Since $\rho>0$, \eqref{eq:domain-monotonicity} also gives
$k_n\leq k_{n+1}$ on $B_n\times B_n$.
For fixed $t>0$ and $x,y\in\mathbb R^d$, the sequence
$k_n(t,x,y)$ is therefore increasing once $n$ is large
enough that $x,y\in B_n$, and
\eqref{eq:local-kernel-bound} bounds it independently of $n$.
Consequently,
$$
k(t,x,y):=\lim_{n\to\infty}k_n(t,x,y)
$$
exists and is finite for every $t>0$ and $x,y\in\mathbb R^d$.

\smallskip

\noindent\emph{Step 4.}  Fix $0<\tau<T$ and $R>0$. By
\eqref{eq:local-kernel-bound}, the kernels $k_n$ are uniformly
bounded for $n>R+1$ on $ [\tau/2,T+1]\times\overline{B_{R+1}} \times\overline{B_{R+1}}$.
The semigroup property and spatial continuity give
\[
k_n(t,x,y)
=\int_{B_n}p_n(t-s,x,z)k_n(s,z,y)\,dz,
\qquad t>s>0.
\]

For fixed $s>0$ and $y\in B_n$, the RHS is the Dirichlet solution with initial datum $k_n(s,\cdot,y)\in C_0(B_n)\subset L^2(B_n)$ at time $s$. Hence, $k_n(\cdot,\cdot,y)$ is a weak solution of
$\partial_t k_n=\Lambda k_n$ for positive times. Strong continuity on $C_0(B_n)$ also gives continuity
in $(t,x)$; see \cite[Section~4.1]{AEG}. So, 
$$
\partial_t w=\operatorname{div}(a\nabla w) +(b-\alpha)|x|^{\alpha-2}x\cdot\nabla w-Vw
$$
has bounded coefficients and is uniformly parabolic on the larger cylinder. Interior H\"older estimates
\cite[``H\"older continuity'']{Aro16},
combined with symmetry, therefore yield
\[
|k_n(t,x,y)-k_n(s,x',y')|
\leq C\bigl(|t-s|^{\theta/2}
          +|x-x'|^\theta+|y-y'|^\theta\bigr)
\]
for $t,s\in[\tau,T]$ and
$x,x',y,y'\in\overline{B_R}$, with $C>0$ and
$0<\theta<1$ independent of $n$.
Passing to the limit proves that $k$ is jointly continuous
and symmetric.

Set $p(t,x,y)=\rho(y)k(t,x,y)$ and extend $p_n$ by zero
outside $B_n\times B_n$. Monotone convergence and
\eqref{eq:exhaustion-comparison} give
\[
\int_{\mathbb R^d}p(t,x,y)f(y)\,dy=T_q(t)f(x),
\qquad t>0,\quad x\in\mathbb R^d,
\]
for every $0\leq f\in C_c^\infty(\mathbb R^d)$.
Uniqueness of locally integrable kernels implies
$p(t,\cdot,\cdot)=p_0(t,\cdot,\cdot)$ almost everywhere
for each $t>0$.
Thus $p$ is a continuous version of the original kernel.
Moreover, $p(t,x,y)\geq p_n(t,x,y)>0$ whenever $x,y\in B_n$,
which proves \eqref{eq:regularpositive}.
Symmetry gives \textup{(i)}, while passing to the limit in
\eqref{eq:local-kernel-bound} with $r=t$ gives
\textup{(iii)} with the same constant $m_t$.

\smallskip

\noindent\emph{Step 5.} Fix $0<r<T<\infty$ and $R>0$, and consider $ t,t_1,t_2\in[r,T]$, and  $x,y\in\overline{B_R}$. 
By Lemma~\ref{lem:upper}, the constant $m_r$ is admissible
in each kernel estimate. Since $\Phi$ is bounded on
$\overline{B_R}$, the upper bound established above gives
\[
\begin{aligned}
0\leq k(t,x,z)\Phi(z) &\leq C\Phi(z)^2,\\
0\leq k(t_1,x,z)k(t_2,z,y) &\leq C\Phi(z)^2,
\end{aligned}
\qquad z\in\mathbb R^d,
\]
where $C$ depends only on $r,T,R$ and the fixed data. By Lemma~\ref{lem:tail}, $\Phi^2\in L^1_\mu$.
Joint continuity of $k$ and dominated convergence therefore show that the two integrals below (\eqref{two_int}) are finite and jointly
continuous in $(t,x)$ and $(t_1,t_2,x,y)$, respectively.
Since $r,T,R$ are arbitrary, this holds for all positive times
and all spatial variables.

The ground-state relation \eqref{equa_Phi} and the semigroup
property give, respectively,
\begin{equation}\label{two_int}
\int_{\mathbb R^d}k(t,x,z)\Phi(z)\,d\mu(z) =e^{\lambda_0t}\Phi(x), \quad
\int_{\mathbb R^d}k(t_1,x,z)k(t_2,z,y)\,d\mu(z) =k(t_1+t_2,x,y),
\end{equation}
initially almost everywhere in the spatial variables,
for each fixed choice of positive times. The first equality
uses the kernel representation, while the second follows
from Tonelli's theorem and uniqueness of kernel representations.
Continuity in the spatial variables extends both equalities
to every point.

Finally, substituting $p(t,x,y)=\rho(y)k(t,x,y)$ and
$d\mu(y)=\rho(y)\,dy$ proves \textup{(ii)} and \textup{(iv)}.

\end{proof}

We next derive a lower bound from the intrinsic upper estimate. The following lemma shows that, for each fixed time, restricting the ground-state integral to a sufficiently large ball preserves at least half of its value. The radius of this ball is independent of the spatial variables.

\begin{lemma}[Uniform localization]\label{lem:mass}
For every $s>0$, there is $R\geq1$ such that
\begin{align}
\int_{\overline B_R} p(s,x,z)\Phi(z) dz
&\geq\frac12\e^{\lambda_0s}\Phi(x),
&&x\in\R^d, \label{eq:initialmass}\\
\int_{\overline B_R}\Phi(w)\rho(w)p(s,w,y) d w
&\geq\frac12\e^{\lambda_0s}\Phi(y)\rho(y),
&&y\in\R^d. \label{eq:terminalmass}
\end{align}
Both integrals are finite; \(R\) is independent of \(x,y\).
\end{lemma}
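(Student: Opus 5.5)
The plan is to subtract the tail from the full ground-state identity, control that tail with the intrinsic upper bound of Proposition~\ref{prop:kernel}\,(iii), and then choose $R$ using the integrability of $\Phi^2\rho$ from Lemma~\ref{lem:tail}.

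\emph{Finiteness.} Fix $s>0$. By Proposition~\ref{prop:kernel}\,(ii) we have $\int_{\R^d}p(s,x,z)\Phi(z)\,dz=\e^{\lambda_0 s}\Phi(x)<\infty$ for every $x$. Since the integrand is nonnegative, the integral over $\overline B_R$ is finite, and
\[
\int_{\overline B_R}p(s,x,z)\Phi(z)\,dz=\e^{\lambda_0 s}\Phi(x)-\int_{|z|>R}p(s,x,z)\Phi(z)\,dz .
\]

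\emph{The tail for \eqref{eq:initialmass}.} By (iii), $p(s,x,z)\Phi(z)\le m_s\e^{\lambda_0 s}\Phi(x)\Phi(z)^2\rho(z)$. Hence
\[
\int_{|z|>R}p(s,x,z)\Phi(z)\,dz\le m_s\e^{\lambda_0 s}\Phi(x)\,\langle\Phi^2\mathbf 1_{|z|>R}\rangle_\rho .
\]
The factor $\Phi(x)$ appears on both sides. So it suffices to choose $R\ge1$ with $m_s\langle\Phi^2\mathbf 1_{|z|>R}\rangle_\rho\le\frac12$. Such an $R$ exists because $\Phi^2\rho\in L^1$ by Lemma~\ref{lem:tail}, so the tail tends to $0$ by dominated convergence. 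The explicit bound from the proof of Lemma~\ref{lem:tail}, $\lesssim R^{-(\alpha+\beta-2)/2}$, even gives a concrete choice. This $R$ depends only on $s$ through $m_s$, and not on $x$.

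\emph{The bound \eqref{eq:terminalmass}.} I would reduce it to the same estimate using the weighted symmetry (i): $\Phi(w)\rho(w)p(s,w,y)=\Phi(w)\rho(y)p(s,y,w)$. Then
\[
\int_{\overline B_R}\Phi(w)\rho(w)p(s,w,y)\,dw=\rho(y)\int_{\overline B_R}p(s,y,w)\Phi(w)\,dw\ge\tfrac12\e^{\lambda_0 s}\Phi(y)\rho(y),
\]
by \eqref{eq:initialmass} applied at the point $y$, with the same $R$. Finiteness follows in the same way from (ii).

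\emph{Main difficulty.} The only delicate point is uniformity in $x$ and $y$, and it is handled by the structure of the intrinsic bound: the tail estimate carries exactly the factor $\Phi(x)$ appearing on the right-hand side, so dividing by $\Phi(x)>0$ leaves a condition on $R$ alone. It also matters that (i)–(iii) hold at every point, not just almost everywhere. This is why the continuous version from Proposition~\ref{prop:kernel} is used: it makes the inequalities valid for all $x,y\in\R^d$.
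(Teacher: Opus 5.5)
Your proposal is correct and follows essentially the same route as the paper. You choose $R\geq1$ with $m_s\langle\Phi^2\mathbf{1}_{\mathbb R^d\setminus\overline B_R}\rangle_\rho\leq\frac12$ using Lemma~\ref{lem:tail}, bound the tail via Proposition~\ref{prop:kernel}\textup{(iii)}, subtract it from the ground-state identity \textup{(ii)}, and obtain \eqref{eq:terminalmass} from \eqref{eq:initialmass} at the point $y$ via the weighted symmetry \textup{(i)}.
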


\begin{proof}
We use the version of $p$ provided by
Proposition~\ref{prop:kernel}.
Fix $s>0$, and let $m_s$ be the constant from
Lemma~\ref{lem:upper}. By Lemma~\ref{lem:tail}, we may choose $R\geq1$ so large that

\begin{equation}\label{eq:tailchoice}
    m_s  \langle \Phi(\cdot)^2  \mathbf{1}_{\mathbb{R}^d\backslash \overline B_R} \rangle_\rho \leq\frac{1}{2}.
\end{equation}

By Proposition~\ref{prop:kernel}\textup{(ii)},
\[
    \int_{\R^d}p(s,x,z)\Phi(z)\,d z
    =\e^{\lambda_0s}\Phi(x)<\infty,
    \qquad x\in\R^d.
\]

Moreover, Proposition~\ref{prop:kernel}\textup{(iii)}
and \eqref{eq:tailchoice} give
\begin{align*}
    \int_{\mathbb{R}^d\backslash \overline B_R} p(s,x,z)\Phi(z)\,d z
    &\leq
    m_s\e^{\lambda_0s}\Phi(x)
   \langle \Phi(\cdot)^2  \mathbf{1}_{\mathbb{R}^d\backslash \overline B_R} \rangle_\rho \\
    &\leq
    \frac12\e^{\lambda_0s}\Phi(x).
\end{align*}
Subtracting this estimate from the ground-state identity,
we obtain
\[
    \int_{ \overline B_R}p(s,x,z)\Phi(z)\,d z
    \geq\frac12\e^{\lambda_0s}\Phi(x),
    \qquad x\in\R^d,
\]
which proves \eqref{eq:initialmass}.

Finally, the weighted symmetry in
Proposition~\ref{prop:kernel}\textup{(i)} implies
\begin{align*}
    \int_{ \overline B_R}\Phi(w)\rho(w)p(s,w,y)\,d w
    &=
    \rho(y)\int_{ \overline B_R}p(s,y,w)\Phi(w)\,d w\\
    &\geq
    \frac12\e^{\lambda_0s}\Phi(y)\rho(y),
    \qquad y\in\R^d.
\end{align*}
This proves \eqref{eq:terminalmass}.
The equality also shows that the second integral is finite.
By its construction, the radius $R$ is independent
of $x$ and $y$.
\end{proof}

We use the semigroup property to split the time interval into
three equal parts. Continuity and strict positivity give a lower
bound for the middle kernel on $\overline B_R\times\overline B_R$,
while Lemma~\ref{lem:mass} provides lower bounds for the two
remaining integrals.

\begin{proposition}\label{prop:bridge}
Fix $s>0$, and let $R\geq1$ be chosen as in Lemma~\ref{lem:mass}. Then
\begin{equation}\label{eq:kappa}
    \kappa_s
    :=\min_{(z,w)\in { \overline B_R} \times { \overline B_R}}
    \frac{\e^{-\lambda_0s}p(s,z,w)}
         {\Phi(z)\Phi(w)\rho(w)}
    >0.
\end{equation}
Moreover,
\begin{equation}\label{eq:at3s}
    p(3s,x,y)
    \geq
    \frac{\kappa_s}{4}\e^{3\lambda_0s}
    \Phi(x)\Phi(y)\rho(y),
    \qquad x,y\in\R^d.
\end{equation}
\end{proposition}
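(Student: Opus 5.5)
The proof splits into two parts: positivity of $\kappa_s$, then a three-step Chapman--Kolmogorov argument in the spirit of Davies and Simon.

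\emph{Positivity of $\kappa_s$.} Consider the quotient $\e^{-\lambda_0s}p(s,z,w)/(\Phi(z)\Phi(w)\rho(w))$ on the compact set $\overline B_R\times\overline B_R$.
\begin{enumerate}[label=\textup{(\alph*)},leftmargin=2em]
\item By Proposition~\ref{prop:kernel}, $p(s,\cdot,\cdot)$ is continuous and strictly positive.
\item $\Phi$ is continuous and strictly positive, and $\rho$ is continuous and strictly positive.
\end{enumerate}
So the quotient is continuous and strictly positive on a compact set. Its minimum is therefore attained and positive, which gives $\kappa_s>0$.

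\emph{The bound at time $3s$.} Fix $x,y\in\R^d$. Apply the semigroup identity \eqref{eq:semigroup} twice to write
\[
p(3s,x,y)=\int_{\R^d}\int_{\R^d}p(s,x,z)\,p(s,z,w)\,p(s,w,y)\,dz\,dw .
\]
All integrands are nonnegative, so Tonelli's theorem applies, and we may discard everything outside $z,w\in\overline B_R$. On that set, the definition of $\kappa_s$ gives
\[
p(s,z,w)\geq\kappa_s\e^{\lambda_0 s}\Phi(z)\Phi(w)\rho(w).
\]
Inserting this, the double integral factorizes:
\[
p(3s,x,y)\geq\kappa_s\e^{\lambda_0s}\Bigl(\int_{\overline B_R}p(s,x,z)\Phi(z)\,dz\Bigr)\Bigl(\int_{\overline B_R}\Phi(w)\rho(w)p(s,w,y)\,dw\Bigr).
\]
Now use Lemma~\ref{lem:mass}. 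By \eqref{eq:initialmass} the first factor is at least $\tfrac12\e^{\lambda_0s}\Phi(x)$. By \eqref{eq:terminalmass} the second factor is at least $\tfrac12\e^{\lambda_0s}\Phi(y)\rho(y)$. Multiplying these gives exactly \eqref{eq:at3s}, namely the constant $\kappa_s/4$ and the factor $\e^{3\lambda_0s}$.

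\emph{Main point of care.} The only delicate issue is that the argument must hold pointwise for every $x,y$, not merely almost everywhere. This is covered because Proposition~\ref{prop:kernel}(iv) holds for the continuous version at every point. It is also essential that $R$ is independent of $x$ and $y$, which Lemma~\ref{lem:mass} guarantees; this is what makes the constant uniform.
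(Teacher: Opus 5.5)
Your proposal is correct and follows the paper's proof essentially step for step. You obtain $\kappa_s>0$ from continuity and strict positivity on the compact set $\overline B_R\times\overline B_R$, then use the pointwise semigroup identity twice with Tonelli, restrict to $\overline B_R\times\overline B_R$, factorize, and apply the two bounds of Lemma~\ref{lem:mass}.
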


\begin{proof}
By Proposition~\ref{prop:kernel} and the continuity
and strict positivity of $\Phi$ and $\rho$, 
$$
    (z,w)\longmapsto
    \frac{\e^{-\lambda_0s}p(s,z,w)}
         {\Phi(z)\Phi(w)\rho(w)}
$$
is continuous and strictly positive on 
$ { \overline B_R} \times { \overline B_R}$.
It therefore attains a strictly positive minimum,
which proves \eqref{eq:kappa}.
In particular,
\begin{equation}\label{eq:bridge}
    p(s,z,w)
    \geq
    \kappa_s\e^{\lambda_0s}\Phi(z)\Phi(w)\rho(w),
    \qquad z,w\in { \overline B_R}.
\end{equation}

Fix $x,y\in\R^d$.
By Proposition~\ref{prop:kernel}\textup{(iv)}
and Tonelli's theorem,
\begin{align*}
    p(3s,x,y)
    &=\int_{\R^d}p(s,x,z)p(2s,z,y)\, d z\\
    &=\int_{\R^d}\int_{\R^d}
      p(s,x,z)p(s,z,w)p(s,w,y)\, d w\, d z.
\end{align*}
Tonelli's theorem applies because the integrand is
nonnegative, and the double integral is finite since
it equals $p(3s,x,y)$.

Restricting the integration to ${ \overline B_R}\times { \overline B_R}$ and using
\eqref{eq:bridge}, we obtain
\begin{align*}
    p(3s,x,y)
    &\geq
    \int_{ \overline B_R}\int_{ \overline B_R}
    p(s,x,z)p(s,z,w)p(s,w,y)\, d w\, d z\\
    &\geq
    \kappa_s\e^{\lambda_0s}
    \left(\int_{ \overline B_R} p(s,x,z)\Phi(z)\, d z\right)
    \left(\int_{ \overline B_R} \Phi(w)\rho(w)p(s,w,y)\, d w\right).
\end{align*}
Both terms are finite by Lemma~\ref{lem:mass}.
Applying the two lower bounds from Lemma~\ref{lem:mass} gives
\begin{align*}
    p(3s,x,y)
    &\geq
    \kappa_s\e^{\lambda_0s}
    \left(\frac12\e^{\lambda_0s}\Phi(x)\right)
    \left(\frac12\e^{\lambda_0s}\Phi(y)\rho(y)\right)\\
    &=\frac{\kappa_s}{4}\e^{3\lambda_0s}
      \Phi(x)\Phi(y)\rho(y).
\end{align*}
This proves \eqref{eq:at3s}. ${ \overline B_R}$ and the constant $\kappa_s$
are independent of $x$ and $y$.
\end{proof}

\section{Proof of Theorem \ref{thm:main}}
\label{sec:proof}
\begin{proof}
Fix $\tau>0$ and set $s=\tau/3$. We first establish the
global intrinsic lower bound and then use the ground-state
estimates to obtain the explicit spatial profile.

\smallskip

Step 1. (The lower bound at time $\tau$).
Choose $R$ by Lemma~\ref{lem:mass} and let $\kappa_s>0$
be given by \eqref{eq:kappa}. Proposition~\ref{prop:bridge} yields
\begin{equation}\label{eq:attau}
 p(\tau,x,y)\geq\frac{\kappa_s}{4}\e^{\lambda_0\tau}
 \Phi(x)\Phi(y)\rho(y),
 \qquad x,y\in\mathbb R^d.
\end{equation}

Once $\tau$ is fixed, the radius $R$ and the constant $\kappa_s$ are chosen independently of $x$ and $y$.

\smallskip

Step 2. (Extension to all $t\geq\tau$). For $t>\tau$, the semigroup property and
\eqref{eq:attau} give
\begin{align*}
 p(t,x,y)
 &=\int_{\mathbb R^d}p(t-\tau,x,z)p(\tau,z,y)\, d z\\
 &\geq\frac{\kappa_s}{4}\e^{\lambda_0\tau}\Phi(y)\rho(y)
       \int_{\mathbb R^d}p(t-\tau,x,z)\Phi(z)\, d z\\
 &=\frac{\kappa_s}{4}\e^{\lambda_0t}\Phi(x)\Phi(y)\rho(y).
\end{align*}
The last equality follows from the ground-state identity
in Proposition~\ref{prop:kernel}\textup{(ii)}. Together with
\eqref{eq:attau}, this proves
\begin{equation}\label{eq:groundlower}
 p(t,x,y)\geq\frac{\kappa_{\tau/3}}{4}\e^{\lambda_0t}
 \Phi(x)\Phi(y)\rho(y),
 \qquad t\geq\tau,\quad x,y\in\mathbb R^d.
\end{equation}
This proves \eqref{eq:main-intrinsic} with
$\widetilde C_\tau=\kappa_{\tau/3}/4$.

\smallskip

Step 3. (The explicit spatial profile). For $|x|,|y|\geq1$, Proposition~\ref{baondedofphi}
and the definition of $\rho$ imply
\begin{align*}
 \Phi(x)\Phi(y)\rho(y)
 &\geq c_-^2(|x||y|)^{-\gamma}\e^{-I(|x|)-I(|y|)}
 (1+|x|^\alpha)^{-b/(2\alpha)}
 (1+|y|^\alpha)^{b/(2\alpha)-1}\\
 &=c_-^2
 \left(\frac{1+|y|^\alpha}{1+|x|^\alpha}\right)^{b/(2\alpha)}
 \frac{(|x||y|)^{-\gamma}}{1+|y|^\alpha}
 \e^{-I(|x|)-I(|y|)}.
\end{align*}
Substitution into \eqref{eq:groundlower} proves
\eqref{eq:main} with
\begin{equation}\label{eq:Ctau}
 C_\tau:=\frac{c_-^2\kappa_{\tau/3}}4>0.
\end{equation}
This completes the proof.
\end{proof}

\makeatletter
\let\savedtocwrite\@tocwrite
\let\@tocwrite\@gobbletwo
\section*{AI usage disclosure}
\let\@tocwrite\savedtocwrite
\makeatother

The author used ChatGPT for editorial purposes, i.e.\,fixing typos, improving language, and carrying out the final editorial review of the paper. 

\bigskip

\noindent
\text{Data availability.} This article has no associated datasets.

\end{document}